\newtheorem{thm}{Theorem} 
\newtheorem{lem}[thm]{Lemma} 
\newtheorem{cor}[thm]{Corollary} 
\newtheorem{prop}[thm]{Proposition} 
\theoremstyle{definition}
\newtheorem{df}[thm]{Definition} 
\newtheorem{ex}[thm]{Example} 
\newtheorem{rmk}[thm]{Remark} 
\begin{document} 
\newcommand         {\rar}[1]       {\stackrel{#1}{\longrightarrow}} 
 
\def    \onto   {\twoheadrightarrow} 
 
\newcommand         {\har}[1]       {\stackrel{#1}{\hookrightarrow}}

\newcommand\rank{\operatorname{rank}\,} 
\newcommand\rdim{\operatorname{rdim}} 
\newcommand\Hom{\operatorname{Hom}} 
\newcommand\mdim{\operatorname{mdim}} 
\newcommand\Ind{\operatorname{Ind}} 
\newcommand\Id{\operatorname{Id}} 
\newcommand\Tr{\operatorname{Tr}} 
\newcommand\F{\operatorname{\mathbb{F}}} 
\newcommand\M{\operatorname{\mathbb{M}}} 
\newcommand\Z{\operatorname{\mathbb{Z}}} 
\newcommand\C{\operatorname{\mathbb{C}}} 
\def \F  {\mathbb{F}} 
 
\def    \ind    {\mathrm{ind}} 
 
\def    \tpsi   {\tilde{\psi}} 
 
 \def	\cA	{\mathcal{A}}
 \def	\bZ	{\mathbb{Z}}
 \def	\bZp	{\bZ/{p\bZ}}
 \def	\bZt	{\bZ/{2\bZ}}
 
\newcommand\GL{\operatorname{GL}} 
\newcommand\Mat{\operatorname{M}} 
\newcommand\End{\operatorname{End}} 
\newcommand\Core{\operatorname{Core}} 
\newcommand\Ker{\operatorname{Ker}} 
\newcommand     \ra     {\rightarrow} 
\newcommand     \inj    {\hookrightarrow} 
\newcommand         {\isom}         {\rar{\simeq}}

\title{Maximal representation dimension of finite $p$-groups}

\author{Shane Cernele, Masoud Kamgarpour, and Zinovy Reichstein} 
\thanks{The authors gratefully acknowledge financial support
by the Natural Sciences and Engineering Research Council of Canada}
 
\address{Department of Mathematics, University of British Columbia, 
Vancouver, BC V6T 1Z2, Canada} 
\address{scernele@math.ubc.ca, masoud@math.ubc.ca, reichst@math.ubc.ca} 
 
\begin{abstract} 
The representation dimension $\rdim(G)$ of a finite group $G$ is 
the smallest positive integer $m$ for which there exists an embedding 
of $G$ in $\GL_m(\C)$. In this paper we find 
the largest value of $\rdim(G)$, as $G$ ranges over all groups 
of order $p^n$, for a fixed prime $p$ and a fixed exponent $n \ge 1$.\\ 
\end{abstract} 

\subjclass[2000]{20C15, 20D15}

\keywords{$p$-group, representation dimension, symplectic subspace, 
generalized Heisenberg group} 
 
\maketitle 
 
\section{Introduction}\label{s:intro} 
 
The representation dimension of a finite group $G$, denoted 
by $\rdim (G)$, is the minimal dimension of a faithful 
complex linear representation of $G$. In this paper we determine 
the maximal representation dimension of a group of 
order $p^{n}$. We are motivated by a recent result of N. Karpenko and 
A. Merkurjev \cite[Theorem 4.1]{km}, which states that 
if $G$ is a finite $p$-group then the essential 
dimension of $G$ is equal to $\rdim (G)$. For a detailed 
discussion of the notion of essential dimension for finite 
groups (which will not be used in this paper),
see \cite{br} or \cite[\S 8]{jly}. 
We also note that a related invariant, the
minimal dimension of a faithful complex 
{\em projective} representation of $G$, has been extensively 
studied for finite simple groups $G$; for an overview, 
see~\cite[\S 3]{tz}.
 
Let $G$ be a $p$-group of order $p^n$ and $r$ be the rank of 
the centre $Z(G)$.
A representation of $G$ is faithful 
if and only if its restriction to $Z(G)$ is faithful. 
Using this fact it is easy to see that a faithful representation
$\rho$ of $G$ of minimal dimension decomposes as a direct sum
\begin{equation} \label{e.decomp}
\rho = \rho_1 \oplus \dots \oplus \rho_r
\end{equation}
of exactly $r$ irreducibles; cf. ~\cite[Theorem 1.2]{mr}.
Since the dimension of any irreducible representation 
of $G$ is $\le \sqrt{[G:Z(G)]}$ (see, e.g.,~\cite[Corollary 3.11]{W03})
and $|Z(G)| \ge p^r$, we conclude that
\begin{equation} \label{e.inequality}
\rdim(G) \leq rp^{\left\lfloor(n-r)/2\right\rfloor} .
\end{equation}
Let 
\[ 
f_p(n) := \max_{r\in\mathbb{N}}(rp^{\left\lfloor(n-r)/2\right\rfloor}). 
\] 
It is easy to check that $f_p(n)$ is given by the following table: 
 
\begin{center} 
\vspace{0.1cm} 
\begin{tabular}{|c| c| c|} 
 
\hline 
$n$&$p$ & $f_p(n)$\\ 
\hline 
even & arbitrary & $2p^{(n-2)/2}$\\ 
\hline 
odd & odd & $p^{(n-1)/2}$\\ 
\hline 
odd, $\ge 3$ & $2$ & $3p^{(n-3)/2}$\\ 
\hline 
$1$ & $2$ & $1$ \\ 
\hline 
\end{tabular} 
\end{center} 

We are now ready to state the main result of this paper.
 
\begin{thm} \label{main} Let $p$ be a prime and $n$ be a positive integer.
For almost all pairs $(p,n)$, the maximal value of
$\rdim(G)$, as $G$ ranges over all groups of order $p^n$,
equals $f_p(n)$.  The exceptional cases are 
\[ 
\text{$(p,n) = (2,5)$, $(2,7)$ and $(p, 4)$, where $p$ is odd.}
\]
In these cases the maximal representation 
dimension is $5$, $10$, and $p + 1$, respectively. 
\end{thm}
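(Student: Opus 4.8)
The plan is to pair the bound $\rdim(G)\le f_p(n)$ from the introduction with explicit extremal groups, handling the three exceptional pairs by a separate structural analysis. For the non-exceptional pairs it remains only to produce, for the value of $r$ singled out in the table, a group of order $p^n$ with $\rdim(G)=r\,p^{\lfloor(n-r)/2\rfloor}$. I would build these as generalized Heisenberg groups: given $\mathbb{F}_p$-vector spaces $V,U$ with $\dim V+\dim U=n$ and a nondegenerate alternating bilinear map $\beta\colon V\times V\to U$ (``nondegenerate'' meaning $\beta$ has zero radical), let $G=G(V,U,\beta)$ be the associated group of nilpotency class $\le 2$, so $|G|=p^n$, $Z(G)=U$, and the commutator pairing of $G$ is $\beta$. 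As recalled in the introduction, a faithful representation of minimal dimension is a direct sum of $\dim U$ irreducibles, one for each character $\chi_i$ in a basis of $\widehat{U}$; and by Clifford theory together with the Stone--von Neumann theorem for the finite Heisenberg group $G/\ker\chi$, the minimal dimension $m(\chi)$ of an irreducible with central character $\chi$ is $p^{\frac12\dim(V/\operatorname{rad}\omega_\chi)}$, where $\omega_\chi\in\Lambda^2V^*$ is the alternating form obtained by composing $\beta$ with $\chi$. Thus $\rdim(G)$ equals the minimum of $\sum_i m(\chi_i)$ over bases of $\widehat{U}$.

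To force every $\omega_{\chi_i}$ to be nondegenerate, hence $\rdim(G)=(\dim U)\,p^{\dim V/2}$, I would take $\dim V=n-r$, $\dim U=r$, with $U=\Omega^*$ and $\beta(v,v')$ the functional $\omega\mapsto\omega(v,v')$, where $\Omega\subseteq\Lambda^2V^*$ is an $r$-dimensional \emph{symplectic subspace}, one all of whose nonzero elements are nondegenerate forms on $V$; then, identifying $\widehat{U}$ with $\Omega$, each nonzero $\chi$ has $\omega_\chi=\chi$ nondegenerate and $\rdim(G)=r\,p^{(n-r)/2}$. The crucial fact is that $\Lambda^2(\mathbb{F}_p^{2m})^*$ has a symplectic subspace of dimension $k$ exactly when $k\le m$: to construct one, write $\mathbb{F}_p^{2m}=(\mathbb{F}_{p^m})^2$, fix a nondegenerate $\mathbb{F}_{p^m}$-symplectic form $\omega$, and take $\{\lambda\circ\omega:\lambda\in\operatorname{Hom}_{\mathbb{F}_p}(\mathbb{F}_{p^m},\mathbb{F}_p)\}$, whose nonzero members are nondegenerate because $\omega(v,\cdot)$ is $\mathbb{F}_{p^m}$-surjective for $v\ne 0$; conversely the Pfaffian restricted to a $k$-dimensional symplectic subspace is a form of degree $m$ in $k$ variables with no nontrivial zero, so $k\le m$ by Chevalley--Warning. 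Hence the required group of order $p^n$ exists whenever $r\le(n-r)/2$, i.e. $n\ge 3r$; scanning the table, this holds for the optimal $r$ in all cases but $(p,4)$ with $p$ odd and $(2,5),(2,7)$, while the boundary cases $n=r$ (where $V=0$) are realized by $(\mathbb{Z}/p)^n$, and the pair $(2,4)$ by $(\mathbb{Z}/2)^4$, which has $\rdim=4=f_2(4)$. This settles every non-exceptional pair.

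For the exceptional pairs I would take: for $(p,4)$, $p$ odd, the group $G=H\times\mathbb{Z}/p$ with $H$ extraspecial of order $p^3$, whose centre has rank $2$ and for which a faithful representation needs a $p$-dimensional summand to be faithful on $Z(H)$ and a further $1$-dimensional summand for the extra $\mathbb{Z}/p$, so $\rdim(G)=p+1$; for $(2,5)$, the group $(\mathbb{Z}/2)^5$, with $\rdim=5$; and for $(2,7)$, the generalized Heisenberg group $G(V,U,\beta)$ with $V=\mathbb{F}_2^4$, $U=\mathbb{F}_2^3$ and $\Omega\subseteq\Lambda^2V^*$ a $3$-dimensional subspace containing exactly one nonzero degenerate form (which is then of $2$-dimensional radical). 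Such an $\Omega$ exists because over $\mathbb{F}_2$ the Pfaffian on $\Lambda^2(\mathbb{F}_2^4)^*\cong\mathbb{F}_2^6$ is, up to isometry, the split quadratic form $x_1x_2+x_3x_4+x_5x_6$, which contains a $2$-dimensional anisotropic subspace $W$ (since $\mathbb{A}\perp\mathbb{A}\cong\mathbb{H}\perp\mathbb{H}$ over $\mathbb{F}_2$, where $\mathbb{A}$ and $\mathbb{H}$ are the anisotropic and hyperbolic planes), and one sets $\Omega=W\oplus\langle w\rangle$ for an isotropic $w\in W^\perp$. Any basis of $\widehat{Z(G)}=\Omega$ then uses at most one degenerate form, so $\rdim(G)=2^{(4-2)/2}+2\cdot 2^{4/2}=2+4+4=10$.

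Finally, the matching upper bounds: given an arbitrary $G$ of order $p^n$, partition by $r=\operatorname{rank}Z(G)$. For every $r$ other than the optimal value of the table, the bound $\rdim(G)\le r\,p^{\lfloor(n-r)/2\rfloor}$ is already at most $p+1$, $5$, $10$, respectively. For the optimal $r$ one may assume $Z(G)\cong(\mathbb{Z}/p)^r$ (otherwise $[G:Z(G)]$ is too small for $\rdim(G)$ to reach the target), and in $\rdim(G)=\min\sum_i m(\chi_i)$ one must preclude $m(\chi_i)$ attaining its maximum $p^{\lfloor(n-r)/2\rfloor}$ for every member of some basis, equivalently for every nonzero $\chi\in\widehat{Z(G)}$. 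When $G/Z(G)$ is abelian this is precisely the statement that $\Lambda^2(G/Z(G))^*$ has no $r$-dimensional symplectic subspace in the pertinent small range, which is the $n<3r$ half of the Chevalley--Warning bound above; so some nonzero $\chi$ has strictly smaller $m(\chi)$, and optimizing over bases through such characters gives $\rdim(G)\le p+1$, $\le 5$, $\le 10$. The remaining, and hardest, case is $(2,7)$ with $G/Z(G)$ nonabelian, so that $G$ has class $\ge 3$ and $[G,G]\not\subseteq Z(G)$: this severely constrains the commutator data of $G$, and a finite verification (passing through the nonabelian groups of order $16$) shows that one again cannot have $m(\chi)=4$ for all seven nonzero $\chi$, so $\rdim(G)\le 10$. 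This last step --- ruling out $\rdim(G)=12$ among the class-$\ge 3$ groups of order $2^7$ --- is the principal obstacle; it, and the full upper bound for the pairs $(2,5)$ and $(2,7)$, can alternatively be confirmed computationally over the groups of order $32$ and $128$.
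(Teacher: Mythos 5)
Your overall strategy coincides with the paper's: take the upper bound $f_p(n)$ from the introduction, realize it by generalized Heisenberg groups $H(V,K,\beta)$ built from symplectic subspaces $K\subseteq\cA(V)$ (your construction of $K$ via an $\F_{p^m}$-structure on $V$ is the same as the paper's Lemma~\ref{l:Existence}), use $\bZp\times(\text{extraspecial of order }p^3)$ for $(p,4)$, the elementary abelian group for $(2,5)$, and a $3$-dimensional $K\subset\cA(\F_2^4)$ with exactly one nonzero degenerate element for $(2,7)$. Two of your ingredients go beyond the paper in a worthwhile way: the Chevalley--Warning/Pfaffian argument showing that $\cA(\F_p^{2m})$ has no symplectic subspace of dimension $>m$ gives a uniform conceptual reason why the exceptional pairs are exceptional (the paper only needs, and only proves, the $1$-dimensionality of $\cA(\F_p^2)$ in the cases $(p,4)$ and $(2,5)$); and your quadratic-form description of the $(2,7)$ example replaces the paper's explicit matrices with a cleaner existence proof. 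Your reduction of the class-$2$ upper bounds to the nonexistence of large symplectic subspaces is also sound, since for $Z(G)$ elementary abelian and $G$ of class $2$ the quotient $G/Z(G)$ is automatically an $\F_p$-vector space and a character $\chi$ with degenerate $\omega_\chi$ admits an irreducible of dimension $<\sqrt{[G:Z(G)]}$.

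The one genuine gap is exactly where you flag it: the upper bound $\rdim(G)\le 10$ for groups of order $2^7$ with $Z(G)\cong(\bZt)^3$ and $G/Z(G)$ nonabelian. You assert that ``a finite verification (passing through the nonabelian groups of order $16$)'' rules out three irreducible summands of dimension $4$, but you neither carry out this verification nor reduce it to a checkable criterion, and it is not obvious how to organize it by hand. The paper closes this case (in fact all groups with $|Z(G)|=8$ at once) by observing that if all three summands had dimension $4$ then every irreducible of $G$ of dimension $1$ or $2$ would kill $Z(G)$ and hence factor through $G/Z(G)$, forcing $m_1+4m_2=16$ for the multiplicities of $1$- and $2$-dimensional irreducibles; it then checks against the James--Newman--O'Brien tables of groups of order $128$ that no group with $|Z(G)|=8$ satisfies this. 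So some appeal to the classification of groups of order $128$ (or an equivalent computation) appears unavoidable here, and your proof is incomplete until you either supply that check or a concrete citation for it. Everything else in your argument is correct.
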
 

The proof will show that the maximal value of $\rdim(G)$,
as $G$ ranges over all groups of order $p^n$, is
always attained for a group $G$ of nilpotency class $\le 2$.
Moreover, if $(p, n)$ is non-exceptional, $n \ge 3$ and
$(p, n) \neq (2, 3), (2, 4)$, 
the maximum is attained on a special class of $p$-groups 
of nilpotency class $2$. We call these groups 
{\em generalized Heisenberg groups} since their 
representation theory looks very similar to the usual 
Heisenberg group (the group of unipotent upper triangular 
$3\times 3$ matrices); see Section~\ref{sect.rep}

The rest of this paper is structured as follows.
In \S \ref{s:heisenberg} we introduce 
generalized Heisenberg groups and study their irreducible 
representations. In \S \ref{s:proof}, we prove  Theorem~\ref{main}. 
 
\subsection*{Acknowledgement} 
 We would like to thank Hannah Cairns, Robert 
Guralnick, Chris Parker, Burt Totaro, and Robert 
Wilson for helpful discussions. We are also grateful to
the referee for constructive comments. 
 
\section{Generalized Heisenberg groups}\label{s:heisenberg} 
\subsection{Spaces of alternating forms}  

Let $V$ be a finite dimensional vector space over an 
arbitrary field $F$. Let $\cA(V)$ denote the space of bilinear 
alternating forms on $V$; that is, linear maps 
$b:V\otimes V\ra F$ satisfying $b(v,v)=0$.  

Let $K$ be a subspace of $\cA(V)$. Then $K$ defines 
a map $\omega_K: V\times V\ra K^*$ as follows. 
Let $j: \cA(V)^*\ra K^*$ denote the dual of 
the natural injection $K\inj \cA(V)$.  
Then  $\omega_K$ is defined to be the composition 
\begin{equation}\label{eq:omega}
\xymatrix{
V \times V \ar[r] \ar@/ _1.7pc/[rrr]_{\omega_K}  & \Lambda^{2}(V)\ar[r]  
& \cA(V)^*\ar[r]^j  &K^{*},\\\\
}
\end{equation}
where the first map is the natural projection and the second one is 
the canonical identification of the two spaces. 

\subsection{Symplectic subspaces} 
\begin{df} A subspace $K\subseteq \cA(V)$ is {\em symplectic} if 
every nonzero element of $K$ is non-degenerate, as a bilinear form on $V$. 
\end{df}

\begin{rmk} \label{r:symplectic}
Equivalently, $K\subset \cA(V)$ is symplectic 
if and only if for every nonzero linear map 
$K^*\ra F$ the composition $V\times V\rar{\omega_K} K^* \ra F$ 
is non-degenerate. 
\end{rmk}

 Clearly nontrivial symplectic 
subspaces of $\cA(V)$ can exist only if $\dim(V)$ 
is even. 

\begin{lem} \label{l:Existence}
Suppose $V$ is an $F$-vector space of dimension $2m$.
If $F$ admits a field extension of degree $m$ then there exists 
an $m$-dimensional symplectic subspace $K\subset \cA$.
\end{lem}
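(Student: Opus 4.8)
The plan is to construct the symplectic subspace explicitly from a degree-$m$ extension $L/F$. Regarding $L$ as an $F$-vector space of dimension $m$, the space $W := L \oplus L$ is an $F$-vector space of dimension $2m$, hence $F$-isomorphic to $V$; so it suffices to produce an $m$-dimensional symplectic subspace of $\cA(W)$ and transport it to $\cA(V)$ along any $F$-linear isomorphism. On $W$ I would use the ``hyperbolic'' $L$-bilinear alternating form $\langle (a,b),(c,d)\rangle := ad-bc$, which takes values in $L$ and is non-degenerate as a form over $L$. I would also fix once and for all a nonzero $F$-linear functional $\phi \colon L \to F$ (the trace form works when $L/F$ is separable, but any nonzero $\phi$ is fine and sidesteps separability).

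For each $\lambda \in L$ set $b_\lambda(v,w) := \phi\bigl(\lambda\,\langle v,w\rangle\bigr)$. This is visibly $F$-bilinear, and alternating because $\langle v,v\rangle = 0$, so $b_\lambda \in \cA(W)$; moreover $\lambda \mapsto b_\lambda$ is $F$-linear, and I let $K$ denote its image. The two claims to verify are that this map is injective (so $\dim_F K = m$) and that every nonzero $b_\lambda$ is non-degenerate (so $K$ is symplectic).

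Both follow from a single observation: for any $v \neq 0$ in $W$ the map $\langle v,\cdot\rangle \colon W \to L$ is a nonzero $L$-linear functional, hence surjective, and similarly $\langle\cdot,\cdot\rangle$ is surjective onto $L$. Hence, if $\lambda \neq 0$, then $w \mapsto \lambda\langle v,w\rangle$ has image $\lambda L = L$, so $b_\lambda(v,\cdot) \equiv 0$ would force $\phi \equiv 0$, a contradiction; this gives non-degeneracy of every nonzero $b_\lambda$. The same computation, letting $v$ vary as well, shows $b_\lambda = 0 \Rightarrow \phi(\lambda L) = 0 \Rightarrow \lambda = 0$, giving injectivity. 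I do not anticipate a real obstacle here; the only point requiring care is not to invoke the trace form uncritically, since it can vanish identically in the inseparable case — which is exactly why the argument is run with an arbitrary nonzero functional $\phi$ and the elementary fact that $(x,y)\mapsto\phi(xy)$ is then automatically a non-degenerate $F$-bilinear form on $L$.
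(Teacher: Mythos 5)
Your proof is correct and is essentially the paper's construction in a coordinate-free packaging: the paper identifies $V$ with $F^m\oplus F^m$ and takes the block forms $\left[\begin{smallmatrix}0 & A\\ -A^{T} & 0\end{smallmatrix}\right]$ with $A$ ranging over the image of the regular representation of a degree-$m$ extension, which is the same idea as your forms $b_\lambda(v,w)=\phi(\lambda\langle v,w\rangle)$ on $L\oplus L$ -- a hyperbolic-type form twisted by multiplication by field elements, with invertibility of multiplication by $\lambda\neq 0$ supplying non-degeneracy in both versions. Your transfer via an arbitrary nonzero functional $\phi$ (rather than the trace) is a sound way to handle the inseparable case, and all the verifications (alternating, injectivity of $\lambda\mapsto b_\lambda$, non-degeneracy of each nonzero $b_\lambda$) go through as you describe.
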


\begin{proof} 
Choosing a basis of $V$, we can identify $\cA(V)$ 
with the space of alternating $2m \times 2m$-matrices.
Let $f \colon \Mat_m(F) \to \cA(V)$ be the
linear map
\[ A \mapsto \begin{bmatrix} 0 & A\\ -A^{T} & 0\\ \end{bmatrix} \, . \]
If $W$ is a linear subspace of $\Mat_m (F) = \End_F(F^m)$ 
such that $W \backslash \{0\} \subset {\rm GL}_m(F)$ then 
$K = f(W)$ is a symplectic subspace.

It thus remains to construct an $m$-dimensional 
linear subspace $W$ of $\Mat_m(F)$ 
such that $W \backslash \{0\} \subset {\rm GL}_m(F)$.
Let $E$ be a degree $m$ field extension of $F$. Then $E$ acts 
on itself by left multiplication. This gives an $F$-vector 
space embedding of $\Psi \colon E \hookrightarrow \End_F(E)$ such that 
$\Psi(e)$ is invertible for all $e \neq 0$.
\end{proof}

\subsection{Groups associated to spaces of alternating forms}
\label{ss:HeisenbergGrps}

Let $V$ be a finite-dimensional vector space over a field $F$. 
Let $K$ be a subspace of $\cA(V)$ and let $\omega_K$ denote 
the induced map $V\times V\ra K^*$, see (\ref{eq:omega}). 
Choose a bilinear map $\beta:V\times V\ra K^*$ such that 
\begin{equation}\label{eq:decomposition} 
\omega_K(v,w)=\beta(v,w)-\beta(w,v). 
\end{equation}
To see that this can always be done, note that 
if $\{e_i\}$ is a basis of $V$, we can define $\beta$ by
\[ \beta(e_i, e_j) = \begin{cases} 
\text{$\omega_K(e_i,e_j)$, if $i>j$ and} \\
\text{$0$, otherwise.} 
\end{cases} \] 
We also remark that $\beta$ is uniquely determined by $\omega_K$,
up to adding a symmetric bilinear form $V \times V \to K^*$.

\begin{df} \label{def.group}
Let $H=H(V,K,\beta)$ denote the group whose underlying set is
$V\times K^*$ and whose multiplication is given by 
\begin{equation} 
(v,t) \cdot(v',t')=(v+v',t+t'+\beta(v,v')). \label{eq:HeisGroup} 
\end{equation} 
If $K$ is a symplectic subspace, we will refer to $H$ as a
{\em generalized Heisenberg group}. 
\end{df}

\begin{ex} \label{ex:Heisenberg} Suppose $\omega$ is a nondegenerate alternating bilinear form on
 $V=F\oplus F$, where $F$ is a field of characteristic not equal to $2$. Let $K$ be the span of $\omega$ in $A(V)$. Then
$H(V,K, \frac{1}{2}\omega)$ is isomorphic to the group of unipotent upper triangular $3\times 3$ matrices over $F$. This group is known as the Heisenberg group.  
\end{ex}

\begin{rmk} \label{rem.centre}It is easy 
to see that~\eqref{eq:HeisGroup} is indeed a group law with
the inverse given by $(v, t)^{-1} = (-v, - t + \beta(v, v))$ and
the commutator given by
\begin{equation} \label{e.commutator}
[(v_1, t_1), (v_2, t_2)] = (0, \omega_K(v_1, v_2)) \, .
\end{equation}
As  $\omega_K$ is surjective, we see that $[H, H] = K^*$. Moreover,~\eqref{e.commutator} also shows that $K^* \subset Z(H)$, 
and that equality 
holds unless the intersection $\displaystyle \cap_{k \in K} \ker(k)$ is nontrivial.
In particular, $Z(H) = K^*$ if $K$ contains a symplectic form.
\end{rmk}
 
\begin{rmk} \label{rem.special} A non-abelian finite $p$-group $S$ is 
called {\em special} if $Z(S)=[S,S]$ and $S/[S,S]$ is elementary abelian; see~\cite[\S 2.3]{hall}. 
Suppose K is a subspace of $\cA(V)$ such 
that $\displaystyle \cap_{k \in K} \ker(k)$ is trivial. Then over 
the finite field $\F_p$, the groups $H(V,K,\beta)$ are examples 
of non-abelian special $p$-groups.
We are grateful to the referee for pointing this out. 
\end{rmk}

\begin{rmk}\label{r:uniqueness}
If $\beta$ and $\beta'$ both satisfy~\eqref{eq:decomposition} 
then $H(V, K, \beta)$ may not be isomorphic to $H(V, K, \beta')$. 
For example, let $V$ be a 2-dimensional vector space over $F=\F_2$, 
$K$ be the one-dimensional (symplectic) subspace generated by  
$\begin{bmatrix} 
  0 & 1\\ 
  1 & 0\\ 
\end{bmatrix}$,  
and $\beta$, $\beta'$ be bilinear forms on $V$ defined by $\begin{bmatrix} 
  1 & 1\\ 
  0 & 1\\ 
\end{bmatrix}$ and 
$\begin{bmatrix} 
  0 & 1\\ 
  0 & 0\\ 
\end{bmatrix}$, respectively. 
Then $\beta$ and $\beta'$ both satisfy \eqref{eq:decomposition}, 
but $H(V,K,\beta)$ is isomorphic to the quaternion group 
while $H(V,K,\beta')$ is isomorphic to the dihedral group of order $8$. 

On the other hand, it is easy to see that $H(V,K,\beta)$ and $H(V,K,\beta')$ 
are always isoclinic. 
(Two groups $S$ and $T$ are isoclinic if there are 
isomorphisms $f: S/Z(S)\rightarrow T/Z(T)$ 
and $g:[S,S]\rightarrow  [T,T]$ such that 
if $a, b \in S$ and $a', b' \in T$ 
with $f(aZ(S))=a'Z(T)$ and $f(bZ(S))=b'Z(T)$, 
then we have $g([a,b])=[a', b']$, see \cite{Ha40}.)
\end{rmk}

\subsection{Representations}
\label{sect.rep}

Let $p$ be an arbitrary prime and let $F=\F_p$ be the finite field 
of $p$ elements. Fix, once and for all, a homomorphism 
$\tau : (\F_p, +) \inj \C^{*}$. Let $W$ be a vector space 
over $F$. Using $\tau$, we identify the algebraic 
dual $W^*=\Hom(W,F)$ with the Pontriyagin dual 
$\Hom(W,\C^{*})$. It is clear that a bilinear alternating 
map $W\times W\ra \F_p$ is non-degenerate if and only 
if the composition $W\times W\ra \F_p\rar{\tau} \C^\times$ 
is non-degenerate.

Now let $V$ be a vector space over $F$, $K$ a subspace of $\cA(V)$, 
and $\omega=\omega_K$ the associated map. 
Choose $\beta$ satisfying (\ref{eq:decomposition}) 
and let $G = H(V, K, \beta) = V \times K^*$. Recall 
that $K^*$ is in the center of $G$ (Remark \ref{rem.centre}); 
in particular, it acts via a character on every irreducible 
representation of $G$. 
 
\begin{lem} \label{l:uniqueird} Let $\rho$ be an irreducible 
representation of $G$ such that $K^{*}$ acts by $\psi$. 
Assume $\psi \circ \omega: V\times V \ra \C^{\times}$ 
is non-degenerate.
\begin{enumerate} 
\item[(a)]If $g \in G$, $g \notin K^{*}$, then $\Tr(\rho(g))=0$. 
\item[(b)]$\dim(\rho)=\sqrt{|V|}$. 
\item[(c)]$\rho$ is uniquely determined (up to isomorphism) by $\psi$. 
\end{enumerate} 
\end{lem}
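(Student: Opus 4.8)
The plan is to analyze $\rho$ by restricting it to the abelian normal subgroup $N = \{(v,t) : v \in V_0\} \times K^*$, where $V_0$ is a maximal totally isotropic subspace for the non-degenerate form $\psi \circ \omega$; alternatively, and more symmetrically, one can use the classical Stone--von Neumann-type argument adapted to finite Heisenberg groups. Concretely, I would first observe that since $\psi \circ \omega$ is non-degenerate, the form $\omega_K$ composed with $\psi$ identifies $V$ with a symplectic $\F_p$-space, and $G/\ker(\psi|_{Z})$ modulo its centre is this symplectic space; the commutator formula~\eqref{e.commutator} shows $[G,G] = K^*$ acts by the fixed character $\psi$, so effectively we are looking at irreducible representations of a finite Heisenberg-type group on which the centre acts by a faithful-on-the-relevant-quotient character.

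For part (a): take $g = (v,t)$ with $v \neq 0$. Since $\psi \circ \omega$ is non-degenerate, there exists $g' = (v', t') \in G$ with $\psi(\omega(v',v)) \neq 1$, i.e. $[g',g]$ is a central element acting by a nontrivial scalar $\lambda = \psi(\omega(v',v)) \neq 1$ on $\rho$. Then $\rho(g') \rho(g) \rho(g')^{-1} = \rho([g',g]g) = \lambda \rho(g)$, so $\Tr(\rho(g)) = \Tr(\lambda \rho(g)) = \lambda \Tr(\rho(g))$, forcing $\Tr(\rho(g)) = 0$. This is the easy part and is really just the standard ``characters vanish off the centre'' argument for groups with this commutator structure.

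For part (b): compute $\langle \chi_\rho, \chi_\rho \rangle = 1$ using part (a). Only elements of $K^*$ contribute to the sum $\frac{1}{|G|}\sum_{g} |\chi_\rho(g)|^2$; on $K^*$, $\rho$ acts by the scalar $\psi(\cdot)$ times $\dim(\rho)$, so $|\chi_\rho(t)|^2 = (\dim \rho)^2$ for each $t \in K^*$. Hence $1 = \frac{|K^*|}{|G|}(\dim\rho)^2 = \frac{1}{|V|}(\dim\rho)^2$, giving $\dim(\rho) = \sqrt{|V|}$. For part (c): I would argue by a dimension/count or by explicit construction. Given $\psi$, induce the character $\psi$ from a maximal abelian subgroup $A$ (of index $\sqrt{|V|}$ in $G$, e.g. $A = V_0 \times K^*$ for a maximal isotropic $V_0$ extended by an extension of $\psi$ to $A$ --- here one must check $\psi$ extends, which it does since $V_0$ is isotropic so $\beta$ restricted to $V_0$ lands in a coset-controlled piece); the induced representation has dimension $[G:A] = \sqrt{|V|}$, and using part (a) one checks it is irreducible (its character has norm $1$) and $K^*$ acts by $\psi$. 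Uniqueness then follows because the number of such irreducibles times $(\dim)^2 = |V|$ must equal the number of elements of $G$ mapping appropriately; more cleanly, any two irreducibles with $K^*$ acting by $\psi$ have the same character, since by part (a) the character is supported on $K^*$ where it is determined by $\psi$ and $\dim = \sqrt{|V|}$, hence they are isomorphic.

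The main obstacle is part (c), specifically producing \emph{one} irreducible with the prescribed central character and confirming $\dim = \sqrt{|V|}$ --- i.e. checking that a maximal isotropic subspace $V_0$ gives a maximal abelian subgroup $A$ to which $\psi$ extends as a character of $A$ (this requires that $\beta|_{V_0 \times V_0}$ be symmetric modulo the kernel of $\psi$, which holds because $\omega = \beta - \beta^T$ vanishes on $V_0 \times V_0$). Once that subgroup and extension are in hand, irreducibility of $\Ind_A^G \psi$ is immediate from the norm-$1$ computation via part (a), and uniqueness is forced by the character being entirely determined. Parts (a) and (b) are short and essentially formal.
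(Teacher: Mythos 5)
Your proofs of (a) and (b), and your final ``more cleanly'' argument for (c) --- the character is supported on $K^{*}$, where it equals $\dim(\rho)\,\psi$, hence is completely determined by $\psi$ --- are exactly the paper's proof. The induced-representation construction you sketch is superfluous for this lemma, which only asserts uniqueness of the given $\rho$ (existence is handled later by counting), and in any case the relevant point there would not be that $V_0\times K^{*}$ is abelian (it need not be) but only that $\psi$ kills its commutators so that $\psi$ extends to a character of it; since that step is not needed, your proof of the statement stands as essentially identical to the paper's.
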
 

\begin{proof} 
(a) Let $g \in G\backslash K^*$. 
Since $\psi \circ \omega$ is non-degenerate 
there exists $h \in G$ such that 
$\psi \circ \omega(gK^{*},hK^{*}) \neq 1$. 
Observe that $\rho([g,h]) = \psi([g,h])\Id$, 
and that $\rho(h^{-1}gh) = \rho(g)\rho([g,h])$. 
Taking the trace of both sides, 
we have $\Tr(\rho(g)) = \psi([g,h])\Tr(\rho(g))$. 
Since $\psi([g,h]) \neq 1$ we must have $\Tr(\rho(g))=0$. 
 
\smallskip
(b) Since $\rho$ is irreducible, and the trace of $\rho$ 
vanishes outside of $K^{*}$, we have: 
\begin{eqnarray*} 1 &=& \frac{1}{|G|}\sum_{g \in G} 
\Tr(\rho(g)) \overline{\Tr(\rho(g))} \\ 
&=&\frac{1}{|G|}\sum_{g \in K^{*}}\Tr(\rho(g))\overline{\Tr(\rho(g))} \\ 
&=& \frac{1}{|G|}\dim(\rho)^{2}\sum_{g \in K^{*}}\Tr(\psi(g))
\overline{\Tr(\psi(g))}\\ 
&=&\dim(\rho)^{2}\frac{|K^{*}|}{|G|} 
\end{eqnarray*} 
Thus $\dim \rho=\sqrt{|G|/|K^{*}|} = \sqrt{|V|}$.

\smallskip
(c) We have completely described the character of $\rho$, and it follows that $\rho$ is uniquely determined by $\psi$. Indeed, 
\[
\Tr(\rho(g)) = \begin{cases}
\sqrt{|V|} \cdot \psi(g), & \text{if $g \in K^{*}$ and} \\
0 & \text{otherwise.}\\
\end{cases}
\]
\end{proof} 

In view of Remark \ref{r:symplectic}, the following proposition 
is a direct consequence of the above lemma. 
 
\begin{prop} \label{p:repHeisenberg} The irreducible representations 
of a generalized Heisenberg group $H=H(V,K,\beta)$ are exhausted by the following list: 
\begin{enumerate} 
 \item[(i)] $|V|$ one-dimensional representations, 
one for every character of $V$. 
 \item[(ii)] $|K|-1$ representations of 
dimension $\sqrt{|V|}$, one for every nontrivial 
character $\psi:K^*\ra \C^\times $. 
\end{enumerate} 
\end{prop}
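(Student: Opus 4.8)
The plan is to combine Lemma~\ref{l:uniqueird} with a dimension count. I would first record that, since $F=\F_p$ is finite, $|H| = |V|\cdot|K^*| = |V|\cdot|K|$, and that by Remark~\ref{rem.centre} we have $[H,H] = K^* = Z(H)$, the last equality holding because $K$ is symplectic (so $\cap_{k\in K}\ker(k) = 0$). Hence $H/[H,H]$ is canonically isomorphic to the abelian group $V$, and the one-dimensional representations of $H$ are exactly the characters of $V$ inflated along $H \onto V$; there are $|V|$ of these, which is the list in~(i).

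Next I would classify the remaining irreducibles by their central character. If $\rho$ is irreducible, then $K^* \subseteq Z(H)$ acts on $\rho$ through a character $\psi\colon K^* \to \C^\times$. When $\psi$ is trivial, $\rho$ factors through $H/K^* = V$, so $\rho$ is one-dimensional and already appears in~(i). When $\psi$ is nontrivial, Remark~\ref{r:symplectic} (applied to the symplectic subspace $K$) shows that $\psi\circ\omega\colon V\times V \to \C^\times$ is non-degenerate, so Lemma~\ref{l:uniqueird} applies and gives $\dim\rho = \sqrt{|V|}$ together with the fact that $\rho$ is determined up to isomorphism by $\psi$. Thus each of the $|K^*|-1 = |K|-1$ nontrivial characters of $K^*$ underlies \emph{at most one} irreducible representation, necessarily of dimension $\sqrt{|V|}$, and every irreducible not in~(i) arises in this way.

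The only remaining point — and the one step I expect to require a genuine argument beyond the lemma — is to prove \emph{existence}: that every nontrivial $\psi$ really does occur. This falls out of the identity $\sum_\rho (\dim\rho)^2 = |H|$, the sum being over the irreducible representations of $H$. The representations in~(i) contribute $|V|\cdot 1$; if $N$ denotes the number of nontrivial characters of $K^*$ that are realized, the remaining irreducibles contribute $N\cdot(\sqrt{|V|})^2 = N|V|$. Hence $|V| + N|V| = |H| = |V|\cdot|K|$, which forces $N = |K|-1$. So all nontrivial characters of $K^*$ occur, yielding precisely the $|K|-1$ representations in~(ii), and the two lists together exhaust the irreducibles. (One could instead verify this by counting conjugacy classes: the $|K|$ elements of $K^*$ are central, and for $g\notin K^*$ the map $\omega_K(\bar g,-)\colon V\to K^*$ is surjective — again because $K$ is symplectic — so the conjugacy class of $g$ is the coset $gK^*$, of size $|K|$; this gives $|K| + (|V|-1)$ conjugacy classes, matching the size of the proposed list.)
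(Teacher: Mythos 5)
Your proof is correct and follows the same route the paper intends: the paper derives the proposition as a direct consequence of Lemma~\ref{l:uniqueird} via Remark~\ref{r:symplectic}, exactly as you do, with you merely making explicit the routine steps left to the reader (the identification of one-dimensional representations with characters of $H/[H,H]\cong V$ and the sum-of-squares, or conjugacy-class, count showing every nontrivial central character is realized).
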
 
 
The next corollary is also immediate upon observing the centre of a generalized Heisenberg groups $H=H(G,K,\beta)$
equals $K^{*}$; see Remark~\ref{rem.centre}.
 
\begin{cor} \label{c:RepDimGenHeis} 
The representation dimension of a generalized Heisenberg group $H=H(V,K,\beta)$ equals $\dim(K)\sqrt{|V|}$. 
\end{cor}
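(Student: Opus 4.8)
The plan is to derive Corollary~\ref{c:RepDimGenHeis} from the recipe for $\rdim$ of a $p$-group of nilpotency class $\le 2$ recalled in the introduction, combined with the explicit list of irreducible representations in Proposition~\ref{p:repHeisenberg}. First I would recall that $Z(H) = K^*$ by Remark~\ref{rem.centre} (this is the assertion explicitly flagged in the corollary's preamble), so the rank $r$ of the centre of $H$ equals $\dim_{\F_p}(K^*) = \dim(K)$. By the general remarks in \S\ref{s:intro} — specifically~\eqref{e.decomp} — a faithful representation of $H$ of minimal dimension decomposes as a direct sum of exactly $r = \dim(K)$ irreducibles, and it is faithful if and only if its restriction to $Z(H) = K^*$ is faithful.

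Next I would produce a faithful representation of dimension $\dim(K)\sqrt{|V|}$ to establish the upper bound $\rdim(H) \le \dim(K)\sqrt{|V|}$. Pick a basis $\psi_1,\dots,\psi_d$ of the dual space $(K^*)^* = K$, viewed as characters $\psi_i : K^* \to \C^\times$; since $K$ is symplectic, each $\psi_i \circ \omega_K$ is a nondegenerate alternating form on $V$ (Remark~\ref{r:symplectic}), so by Proposition~\ref{p:repHeisenberg}(ii) there is an irreducible $\rho_i$ of dimension $\sqrt{|V|}$ on which $K^*$ acts by $\psi_i$. Then $\rho = \rho_1 \oplus \dots \oplus \rho_d$ has dimension $d\sqrt{|V|} = \dim(K)\sqrt{|V|}$, and its restriction to $K^*$ is $\bigoplus_i \psi_i$; since the $\psi_i$ span $K$, which separates points of $K^*$, this restriction is faithful, hence $\rho$ is faithful.

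For the matching lower bound, let $\rho = \rho_1 \oplus \dots \oplus \rho_r$ be a faithful representation of minimal dimension with $r = \dim(K)$ irreducible summands, as in~\eqref{e.decomp}. Each $\rho_j$ has some central character $\psi_j : K^* \to \C^\times$. If some $\psi_j$ were trivial then $\rho_j$ would factor through $H/K^* = V$, contributing nothing to faithfulness on the centre, so the $r-1$ remaining summands would have to restrict faithfully to the rank-$r$ group $K^*$ — impossible. Hence every $\psi_j$ is nontrivial, so by Proposition~\ref{p:repHeisenberg}(ii) each $\rho_j$ has dimension $\sqrt{|V|}$, and $\dim(\rho) = r\sqrt{|V|} = \dim(K)\sqrt{|V|}$. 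Combining the two inequalities gives the claim.

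The main thing to be careful about — the only place a genuine argument is needed rather than bookkeeping — is the lower bound step: one must justify both that a minimal faithful representation really has exactly $r$ irreducible constituents (this is the cited fact~\eqref{e.decomp}, so it may be quoted) and that none of those constituents can be one-dimensional, which is the little pigeonhole argument on faithfulness of the restriction to $Z(H) = K^*$. Everything else is immediate from Proposition~\ref{p:repHeisenberg} and Remark~\ref{rem.centre}; in particular the fact that the characters $\psi_1,\dots,\psi_d$ chosen from a basis of $K$ jointly separate points of $K^*$ is just nondegeneracy of the pairing between $K$ and $K^*$.
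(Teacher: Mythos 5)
Your proposal is correct and follows essentially the same route as the paper: the paper declares the corollary ``immediate'' from Proposition~\ref{p:repHeisenberg} together with the identification $Z(H)=K^*$ of Remark~\ref{rem.centre}, and your argument simply spells out the implicit bookkeeping (rank of the centre equals $\dim(K)$, the decomposition~\eqref{e.decomp} into exactly $\dim(K)$ irreducibles, nontriviality of each central character, and the matching construction from a basis of $K$). Everything you wrote is sound; no gap to report.
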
 

If $G$ is a finite Heisenberg group in the usual sense 
(as in Example~\ref{ex:Heisenberg}) then for each nontrivial 
character $\chi$ of $Z(G)$ there is a unique irreducible 
representation $\psi$ of $G$ whose central character is $\chi$; 
cf.~\cite[\S1.1]{GH07}.  This is a finite group variant of the celebrated
Stone-von Neumann Theorem. For a detailed discussion of the history
and the various forms of the Stone-von Neumann theorem we refer 
the reader to~\cite{rosenberg}. We conclude this section with
another immediate corollary
of Proposition~\ref{p:repHeisenberg} which tells us that over 
the field $\mathbb{F}_p$ every generalized Heisenberg group
has the Stone-von Neumann property. 
This corollary will not be needed in the sequel.

\begin{cor} \label{cor.stone} Two irreducible representations 
of a generalized Heisenberg group with the same nontrivial 
central character are isomorphic. 
\end{cor}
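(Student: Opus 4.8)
The plan is to derive this directly from Proposition~\ref{p:repHeisenberg}. Let $H = H(V,K,\beta)$ be a generalized Heisenberg group, so by definition $K$ is a symplectic subspace of $\cA(V)$. Fix a nontrivial central character $\chi$ of $Z(H)$. By Remark~\ref{rem.centre}, since $K$ contains a symplectic form, we have $Z(H) = K^*$, so $\chi$ is a nontrivial character $\psi : K^* \to \C^\times$. Because $K$ is symplectic, Remark~\ref{r:symplectic} tells us that for every nonzero linear map $K^* \to F$ the composition with $\omega_K$ is non-degenerate; applying this to the linear functional underlying $\psi$ (using the identification of $K^*$'s algebraic dual with its Pontryagin dual via $\tau$), we conclude that $\psi \circ \omega : V \times V \to \C^\times$ is non-degenerate.

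With that hypothesis verified, Lemma~\ref{l:uniqueird}(c) applies: any irreducible representation of $H$ on which $K^*$ acts by $\psi$ is uniquely determined up to isomorphism by $\psi$. Hence any two irreducible representations of $H$ with the same nontrivial central character $\chi = \psi$ are isomorphic, which is exactly the assertion. Alternatively, one can simply read this off the classification in Proposition~\ref{p:repHeisenberg}(ii), which lists exactly one irreducible of dimension $\sqrt{|V|}$ for each nontrivial character $\psi$ of $K^*$, while the one-dimensional representations in (i) all have trivial central character and so are irrelevant here.

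There is essentially no obstacle: the only point requiring a moment's care is the bookkeeping that the "nontrivial central character" of the statement really does correspond to a nontrivial $\psi$ on $K^* = Z(H)$ for which $\psi \circ \omega_K$ is non-degenerate, so that Lemma~\ref{l:uniqueird} is applicable. This is immediate from Remark~\ref{rem.centre} (identifying the centre) together with Remark~\ref{r:symplectic} (non-degeneracy), both of which we are free to invoke.
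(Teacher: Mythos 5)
Your proposal is correct and matches the paper's reasoning: the corollary is read off from Proposition~\ref{p:repHeisenberg} (equivalently Lemma~\ref{l:uniqueird}(c) via Remarks~\ref{rem.centre} and~\ref{r:symplectic}), since the one-dimensional representations have trivial central character and each nontrivial character of $K^*$ admits exactly one irreducible. The paper treats this as immediate from the classification, exactly as you do.
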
 

Corollary~\ref{cor.stone} is the reason we chose to use the term
``generalized Heisenberg group" in reference to the groups 
$H(V, K, \beta)$, where K is a symplectic subspace. 
Special $p$-groups (Remark~\ref{rem.special}) which are 
not generalized Heisenberg groups may not have the Stone-von 
Neumann property; see Remark \ref{r:SpecialNotHeisenberg}.

\section{Proof of Theorem \ref{main}} \label{s:proof} 
 
The case where $n \le 2$ is trivial; clearly $\rdim(G) = \rank(G)$ 
if $G$ is abelian. We will thus assume that $n \ge 3$. 

In the non-exceptional cases of the theorem, 
in view of the inequality~\eqref{e.inequality},
it suffices to construct 
a group $G$ of order $p^n$ with $\rdim(G) = f_p(n)$. Here
$f_p(n)$ is the function defined just before 
the statement of Theorem~\ref{main}. 

If $(p, n) = (2, 3)$ or $(2, 4)$, we take $G$ to be the elementary 
abelian group $(\bZt)^3$ and $(\bZt)^4$, yielding
the desired representation dimension of $3$ and $4$, respectively. 
For all other non-exceptional pairs $(p, n)$, we take $G$ to be a generalized 
Heisenberg group as described in the table below.
Here $H(V, K)$ stands for $H(V, K, \beta)$, 
for some $\beta$ as in~\eqref{eq:decomposition}.
In each instance, the existence of a symplectic subspace $K$ of suitable 
dimension is guaranteed by Lemma \ref{l:Existence}  and the value of
$\rdim(H(V, K))$ is given by Corollary~\ref{c:RepDimGenHeis}. 
 
\begin{center} 
\vspace{0.1cm} 
\begin{tabular}{|c| c| c| c| c|} 
 
\hline 
$n$   &   $p$ &    $\dim(V)$ & $\dim(K)$ & $\rdim(H(V,K))$\\ 
\hline 
even, $\ge 6$ & arbitrary & $n-2$ &  2   & $2p^{(n-2)/2}$\\ 
\hline 
odd, $\ge 3$ &   odd &   $n-1$ & 1 & $p^{(n-1)/2}$\\ 
\hline 
odd, $\ge 9$ & $2$ & $n-3$ & 3 & $3p^{(n-3)/2}$\\ 
\hline 
\end{tabular} 
\end{center} 
 
\smallskip
This settles the generic case of Theorem~\ref{main}.
We now turn our attention to the exceptional cases. 
We will need the following upper bound on $\rdim(G)$, 
strengthening~\eqref{e.inequality}.

Let $\Omega_{1}(Z(G))$ be the subgroup of elements 
$g \in Z(G)$ such that $g^p = 1$.  

\begin{lem} \label{lem.inequality2}
Let $G$ be a $p$-group and 
$r = \rank(Z(G)) = \rank(\Omega_{1}(Z(G)))$.

\smallskip
(a) Let $\rho_1$ be an irreducible representation of $G$
such that $\Ker(\rho_1)$ does not contain $\Omega_{1}(Z(G))$. Then
there are irreducible representations $\rho_2, \dots, \rho_r$ of $G$
such that $\rho_1 \oplus \dots \oplus \rho_r$ is faithful. In particular, 
\[ \rdim(G) \le \dim(\rho_1) + (r-1) \sqrt{[G:Z(G)]} \, . \] 

(b) If $\Omega_{1}(Z(G))$ is not contained in $[G, G]$,
then 
\[ \rdim(G) \le 1 + (r-1) \sqrt{[G:Z(G)]} \, . \] 
\end{lem}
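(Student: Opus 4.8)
The plan is to reduce everything to linear algebra over $\F_p$ on the elementary abelian group $U := \Omega_1(Z(G))$, which by hypothesis has rank $r$, i.e.\ $U \cong \F_p^{\,r}$. I would first isolate two standard facts. (1) \emph{Faithfulness can be tested on $U$.} For a family of characters $\psi_1,\dots,\psi_k$ of the finite abelian $p$-group $Z(G)$, one has $\bigcap_i \Ker(\psi_i)=1$ if and only if $\bigcap_i \Ker(\psi_i)\cap U = 1$ (any nontrivial subgroup of $Z(G)$ contains an element of order $p$, hence meets $U$); and since $U$ is an $\F_p$-vector space and the $\psi_i|_U$ are $\F_p$-linear functionals on it, the latter holds if and only if $\psi_1|_U,\dots,\psi_k|_U$ span the $\F_p$-dual $U^*$. (2) \emph{Realizability of central characters.} Every character $\psi$ of $Z(G)$ occurs as the central character of some irreducible representation $\rho$ of $G$ with $\dim\rho \le \sqrt{[G:Z(G)]}$: decompose $\Ind_{Z(G)}^G \psi$ into irreducibles and apply Frobenius reciprocity, noting that $Z(G)$ acts by a scalar on each irreducible of $G$; the dimension bound is the one already quoted in the introduction.

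For part~(a): since $\rho_1$ is irreducible, $Z(G)$ acts on it through a single character $\psi_1$, and the assumption that $\Ker(\rho_1)$ does not contain $U$ says exactly that $\ell_1 := \psi_1|_U$ is a \emph{nonzero} functional in $U^*$. I would extend $\ell_1$ to a basis $\ell_1,\dots,\ell_r$ of $U^*$, lift each $\ell_i$ to a character $\psi_i$ of $Z(G)$ (characters of a subgroup of a finite abelian group extend, as $\C^\times$ is divisible), and use fact~(2) to realize each $\psi_i$ as the central character of an irreducible $\rho_i$ with $\dim\rho_i \le \sqrt{[G:Z(G)]}$. Then $\rho := \rho_1\oplus\dots\oplus\rho_r$ has central characters $\psi_1,\dots,\psi_r$ whose restrictions to $U$ form a basis of $U^*$, so by fact~(1) the restriction $\rho|_{Z(G)}$ is faithful, hence $\rho$ is faithful on $G$ (a representation of a $p$-group is faithful iff its restriction to $Z(G)$ is, as recalled in the introduction). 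Adding dimensions gives $\rdim(G) \le \dim(\rho_1) + (r-1)\sqrt{[G:Z(G)]}$.

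For part~(b): the hypothesis $U = \Omega_1(Z(G)) \not\subseteq [G,G]$ means the image of $U$ in the abelianization $G/[G,G]$ is a nontrivial abelian $p$-group; it therefore carries a nontrivial character, which extends to a character $\chi$ of $G/[G,G]$, and inflating $\chi$ to $G$ produces a one-dimensional representation $\rho_1$ of $G$ with $\Ker(\rho_1) \not\supseteq U$. Applying part~(a) with this $\rho_1$ and $\dim(\rho_1)=1$ yields the stated bound.

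There is no genuinely hard step here; the argument is a matter of assembling standard facts in the right order. The point that most needs care is the reduction in fact~(1)—that faithfulness of a family of central characters is controlled by their behaviour on $\Omega_1(Z(G))$ and thereby becomes a spanning condition for $\F_p$-linear functionals—together with verifying that the functionals $\ell_i$ can simultaneously be lifted from $U$ to $Z(G)$ and then realized as central characters of irreducibles of $G$ of the right dimension; once those are in place the dimension count is immediate.
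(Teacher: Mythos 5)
Your proposal is correct and follows essentially the same route as the paper: extend the (nonzero) restriction of the central character of $\rho_1$ to a basis of $\Omega_1(Z(G))^*$, realize each basis functional as the central character of an irreducible via induction and Frobenius reciprocity, check faithfulness on $\Omega_1(Z(G))$, and in (b) produce a one-dimensional $\rho_1$ from the abelianization. The only (immaterial) difference is that you lift the functionals to characters of $Z(G)$ and induce from $Z(G)$, while the paper induces directly from $\Omega_1(Z(G))$.
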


The lemma can be deduced from~\cite[Remark 4.7]{km}
or~\cite[Theorem 1.2]{mr}; for the sake of completeness we give 
a self-contained proof.

\begin{proof}
(a) Let $\chi_1$ be the restriction to $\Omega_{1}(Z(G))$ of the central 
character of $\rho_1$. By our assumption $\chi_1$ is nontrivial.
Complete $\chi_1$ to a basis $\chi_1, \chi_2, \dots, \chi_r$ 
of the $r$-dimensional $\F_p$-vector space $\Omega_{1}(Z(G))^*$ 
and choose an irreducible representation $\rho_i$ such that 
$\Omega_1(Z(G))$ acts by $\chi_i$. (The representation
$\rho_i$ can be taken to be any irreducible component
of the induced representation $\Ind_{\Omega_{1}(Z(G))}^G(\chi)$.)
The restriction 
of $\rho \colon = \rho_1 \oplus \dots \oplus \rho_r$ 
to $\Omega_{1}(Z(G))$ is faithful. Hence, $\rho$ is a faithful
representation of $G$. As we mentioned in the introduction
$\dim(\rho_i) \le \sqrt{[G: Z(G)]}$ for every $i \ge 2$, and 
part (a) follows.

\smallskip
(b) By our assumption there exists a one-dimensional representation
$\rho_1$ of $G$ whose restriction to $\Omega_{1}(Z(G))$ is nontrivial.
Now apply part (a).
\end{proof}

We are now ready to prove Theorem~\ref{main} in the three exceptional cases.
 
\subsection{Exceptional case 1: $p$ is odd and $n=4$} 
 
\begin{lem} \label{lem.p^4}
Let $p$ be an odd prime and $G$ be a group of order $p^4$. 

\smallskip
(a) Then $\rdim(G)\leq p+1$. 

\smallskip
(b) Suppose $Z(G) \simeq (\bZp)^2$ and $G/Z(G) \simeq (\bZp)^2$. 
Then $\rdim(G) = p + 1$.
\end{lem}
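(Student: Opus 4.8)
The plan is to prove part (a) by a case analysis on $\rank(Z(G))$, and then to deduce part (b) by combining the upper bound from (a) with a lower bound read off from the irreducible representations of $G$. For part (a): if $G$ is abelian, then $\rdim(G) = \rank(G) \le 4 \le p+1$, the last step because $p$ is odd. If $G$ is non-abelian, then $G/Z(G)$ is a non-cyclic $p$-group, so $|Z(G)| \in \{p, p^2\}$. When $|Z(G)| = p$ the inequality~\eqref{e.inequality} already gives $\rdim(G) \le p^{\lfloor (4-1)/2 \rfloor} = p$. The substantial case is $|Z(G)| = p^2$: here $G/Z(G) \cong (\bZp)^2$ and $Z(G) \cong (\bZp)^2$, and since $G/Z(G)$ is abelian we have $[G,G] \subseteq Z(G)$. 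I would then observe that the commutator induces an alternating $\F_p$-bilinear map $G/Z(G) \times G/Z(G) \to [G,G]$ whose image generates $[G,G]$; as $\Lambda^2(G/Z(G))$ is one-dimensional over $\F_p$, this forces $|[G,G]| = p$ (and $[G,G] \neq 1$ because $G$ is non-abelian). Hence $\Omega_1(Z(G)) = Z(G)$ is not contained in $[G,G]$, so Lemma~\ref{lem.inequality2}(b), applied with $\rank(Z(G)) = 2$ and $[G:Z(G)] = p^2$, yields $\rdim(G) \le 1 + p$.

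For part (b), the hypotheses put us exactly in the case $|Z(G)| = p^2$ above, so part (a) supplies $\rdim(G) \le p+1$ and it remains to prove the reverse inequality. The approach uses the representation theory of $G$. Since $[G:Z(G)] = p^2$, every irreducible representation of $G$ has dimension dividing $p^4$ and at most $\sqrt{p^2} = p$, hence equal to $1$ or $p$. By part (a) the subgroup $[G,G]$ has order $p$ and is thus a proper subgroup of $Z(G)$; a one-dimensional representation kills $[G,G]$, so its central character lies in the line $\{\chi \in Z(G)^* : \chi|_{[G,G]} = 1\}$, while a $p$-dimensional irreducible does not factor through the abelian quotient $G/[G,G]$ and so has central character outside that line. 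Now take an arbitrary faithful representation and decompose it as $\rho = \rho_1 \oplus \dots \oplus \rho_k$ into irreducibles; faithfulness forces the central characters of the $\rho_i$ to span $Z(G)^* \cong (\bZp)^2$, so at least one $\rho_i$ must be $p$-dimensional, and since a single summand has central character spanning only a line we need $k \ge 2$. Therefore $\dim(\rho) \ge p+1$, so $\rdim(G) \ge p+1$, and with part (a) we obtain $\rdim(G) = p+1$. (Such groups exist, for instance the direct product of $\bZp$ with the Heisenberg group of order $p^3$, so the hypothesis of (b) is not vacuous.)

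I expect the main obstacle to be the case $|Z(G)| = p^2$ in part (a): the crude bound~\eqref{e.inequality} only gives $\rdim(G) \le 2p$ there, and one must extract the extra structural information --- via the $\Lambda^2$-dimension count, namely that $[G,G]$ is as small as possible --- in order to invoke the sharper Lemma~\ref{lem.inequality2}(b) and bring the bound down to $p+1$. Once that point is settled, the remaining pieces (the abelian and $|Z(G)|=p$ subcases of (a), and the character-counting lower bound in (b)) are routine.
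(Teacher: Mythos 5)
Your proof follows the paper's argument in all essentials: the decisive step in both is the observation that the commutator induces an alternating bilinear map on the two-dimensional $\F_p$-space $G/Z(G)$ with values in $Z(G)$, so that its image --- which generates $[G,G]$ --- has order exactly $p$; this feeds into Lemma~\ref{lem.inequality2}(b) to give the upper bound $p+1$, and the lower bound in (b) comes from the same ``at least two summands, not both one-dimensional'' count. (The paper phrases (a) as a proof by contradiction that reduces to (b), whereas you run the cases directly, but the content is identical.)

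There is one unjustified assertion in your case analysis for (a): from $|Z(G)| = p^2$ you conclude $Z(G) \cong (\bZp)^2$. That need not hold --- the centre can be cyclic of order $p^2$, as for the modular group $\langle a,b \mid a^{p^3}=b^p=1,\ bab^{-1}=a^{1+p^2}\rangle$ of order $p^4$, whose centre is $\langle a^p\rangle \cong \bZ/p^2\bZ$. The missing case is harmless: if $Z(G)$ is cyclic then $\rank(Z(G))=1$ and~\eqref{e.inequality} already gives $\rdim(G)\le p$, exactly as in your $|Z(G)|=p$ case. The paper sidesteps the issue by splitting off ``$Z(G)$ cyclic'' as its own case; you should do the same, i.e., organize the cases by $\rank(Z(G))$ (as your opening sentence in fact promises) rather than by $|Z(G)|$.
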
 

\begin{proof} (a) 
We argue by contradiction. Assume there exists a group of order $p^4$ such that
$\rdim(G) \ge  p+2$. If $|Z(G)| \ge p^3$ or $G/Z(G)$ is cyclic
then $G$ is abelian and $\rdim(G) = \rank(G) \le 4 \le p+1$, a contradiction.
If $Z(G)$ is cyclic then $\rdim(G) \le p$ by~\eqref{e.inequality}, again
a contradiction. 

Thus $Z(G) \simeq G/Z(G) \simeq (\bZp)^2$. 
This reduces part (a) to part (b).

\smallskip
(b) Here $\Omega_{1}(Z(G)) = Z(G)$ has rank $2$. Hence,
a faithful representation $\rho$ of $G$
of minimal dimension is the sum of two irreducibles
$\rho_1 \oplus \rho_2$, as in~\eqref{e.decomp},
each of dimension $1$ or $p$.

Clearly $\dim(\rho_1) = \dim(\rho_2) = 1$ is not possible, since
in this case $G$ would be abelian, contradicting $[G:Z(G)] = p^2$.
It thus remains to show that $\rdim(G) \le p + 1$.
Since $G/Z(G)$ is abelian, $[G,G] \subset Z(G)$. 
Hence, by Lemma~\ref{lem.inequality2}(b) we only need to establish 
that $[G, G] \subsetneq Z(G)$. 

To show that $[G, G] \subsetneq Z(G)$, note that the commutator map 
\begin{eqnarray*} \Psi: G/Z(G) \times G/Z(G) &\rightarrow& [G,G]\\ 
(gZ(G), g'Z(G)) &\mapsto& [g,g']
\end{eqnarray*} 
can be thought of as an alternating bilinear map from 
$\F_p^{2}$ to itself. Viewed in this way, $\Psi$ can 
be written as $\Psi(v,v')=(w_1(v,v'), w_2(v,v'))$ for 
alternating maps $w_1$ and $w_2$ from $(\F_p)^{2}$ to $\F_p$. 
Since the space of alternating maps is a one-dimensional vector space
over $\F_p$, $w_1$ and $w_2$ are scalar multiples of each other. 
Hence, the image of $\Psi$ is a cyclic group of order $p$, and
$[G,G] \subsetneq Z(G)$, as claimed. 
\end{proof} 

To finish the proof of Theorem~\ref{main} in this case, note that
$G = \bZp \times G_0$, where $G_0$ is 
a non-abelian group of order $p^3$, satisfies the conditions
of Lemma~\ref{lem.p^4}(b). Thus the maximal representation dimension 
of a group of order $p^{4}$ is $p+1$, for any odd prime $p$.

\subsection{Exceptional case 2: $p=2$ and $n=5$} 
\begin{lem} Let $G$ be a group of order $32$. Then $\rdim(G)\leq 5$. 
\end{lem}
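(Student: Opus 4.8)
The plan is to reduce to one hard case by analysing $r := \rank(Z(G))$. If $G$ is abelian then $\rdim(G) = \rank(G) \le 5$, so I may assume $G$ is non-abelian; then $G/Z(G)$ is non-cyclic, of order $\ge 4$, so $|Z(G)| \le 8$. The inequality~\eqref{e.inequality} gives $\rdim(G) \le r\,2^{\lfloor (5-r)/2\rfloor}$, which is $\le 4$ for $r \in \{1,2,4\}$, while $r \ge 5$ cannot occur for non-abelian $G$. The only case requiring real work is $r = 3$, where~\eqref{e.inequality} only yields the bound $6$.

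Suppose then that $r = 3$. Since $|Z(G)| \ge 2^3 = 8$ and, $G$ being non-abelian, $|Z(G)| \le 8$, we get $|Z(G)| = 8$ with $\rank(Z(G)) = 3$, hence $Z(G) \simeq (\bZt)^3$; in particular $\Omega_1(Z(G)) = Z(G)$, of rank $3$. Moreover $G/Z(G)$ has order $4$ and is non-cyclic, so $G/Z(G) \simeq (\bZt)^2$, $G$ has nilpotency class $2$, and $[G,G] \subseteq Z(G)$. Running the argument from the proof of Lemma~\ref{lem.p^4}(b), the commutator map descends to an alternating $\F_2$-bilinear map $\Psi\colon G/Z(G) \times G/Z(G) \to [G,G]$ whose image generates $[G,G]$; since the space of alternating bilinear forms on $(\F_2)^2$ is one-dimensional, after choosing coordinates $[G,G] \hookrightarrow (\bZt)^3$ each component of $\Psi$ is a scalar multiple of the unique nonzero such form, so the image of $\Psi$ lies in a subgroup of order $\le 2$ and $[G,G]$ has order $\le 2 < 8$. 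In particular $[G,G] \subsetneq Z(G)$.

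Now $\Omega_1(Z(G)) = Z(G) \not\subseteq [G,G]$ and $\rank(Z(G)) = \rank(\Omega_1(Z(G))) = 3$, so Lemma~\ref{lem.inequality2}(b) applies and yields
\[ \rdim(G) \le 1 + (3-1)\sqrt{[G:Z(G)]} = 1 + 2\cdot 2 = 5, \]
completing all cases. The only genuine obstacle is the case $r = 3$: once one knows $Z(G) \simeq (\bZt)^3$ and $G/Z(G) \simeq (\bZt)^2$, the whole thing turns on the one-dimensionality of the space of alternating forms on $\F_2^2$, which forces $[G,G]$ to have order $\le 2$ and thereby unlocks the sharpened bound of Lemma~\ref{lem.inequality2}(b); all other values of $r$ are handled immediately by~\eqref{e.inequality}.
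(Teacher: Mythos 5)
Your proof is correct and follows essentially the same route as the paper: reduce via inequality~\eqref{e.inequality} to the case $r=3$ with $Z(G)\simeq(\bZt)^3$ and $G/Z(G)\simeq(\bZt)^2$, then use the one-dimensionality of the space of alternating forms on $\F_2^2$ (the argument of Lemma~\ref{lem.p^4}(b)) to get $[G,G]\subsetneq Z(G)$, and conclude with Lemma~\ref{lem.inequality2}(b). The only difference is cosmetic: you argue directly by cases (and dispose of $r=5$ via non-abelianness) where the paper frames the same steps as a proof by contradiction.
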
 

\begin{proof}  We argue by contradiction. Assume there exists a group 
of order $32$ and representation dimension $\ge 6$. Let
$r = \rank(Z(G))$. Then $1 \le r \le 5$ and
\eqref{e.inequality} shows that $\rdim(G) \le 5$
for every $r \neq 3$.

Thus we may assume $r = 3$.
If $|Z(G)| \ge 16$ or $G/Z(G)$ is cyclic
then $G$ is abelian, and $\rdim(G) = \rank(G) \le 5$. 
We conclude that $Z(G) \simeq
(\bZt)^3$ and $G/Z(G) \simeq (\bZt)^2$.  Applying the same 
argument as in the proof of Lemma~\ref{lem.p^4}(b),
we see that $[G,G] \subsetneq Z(G)$, and hence 
$\rdim (G) \leq 5$ by Lemma~\ref{lem.inequality2}(b), a contradiction.
\end{proof} 

To finish the proof of Theorem~\ref{main} in this case, note that 
the elementary abelian group of order $2^{5}$ has representation 
dimension $5$. Thus the maximal representation dimension of 
a group of order $2^{5}$ is $5$.
 
\subsection{Exceptional case 3: $p=2$ and $n=7$} 
\begin{lem} \label{l:rdimleqten}If $|G| = 128$ then $\rdim(G) \le 10$. 
\end{lem}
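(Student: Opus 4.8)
The plan is to mimic the structure of the two preceding exceptional cases (Lemmas for $p^4$ and order $32$), arguing by contradiction: suppose $|G| = 128 = 2^7$ and $\rdim(G) \ge 11$. Write $r = \rank(Z(G)) = \rank(\Omega_1(Z(G)))$, so $1 \le r \le 7$. The inequality \eqref{e.inequality}, namely $\rdim(G) \le r \, 2^{\lfloor (7-r)/2 \rfloor}$, evaluates to $8, 8, 12, 12, 10, 6, 7$ for $r = 1, \dots, 7$. Hence $\rdim(G) \ge 11$ forces $r = 3$ or $r = 4$. In each of these two cases I would first pin down the isomorphism types of $Z(G)$ and of the quotient $G/Z(G)$ that are not immediately excluded, then apply the strengthened bound Lemma~\ref{lem.inequality2}(b) via the sub-claim $\Omega_1(Z(G)) \not\subseteq [G,G]$.

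For $r = 4$: if $|Z(G)| \ge 2^4$ with $G/Z(G)$ cyclic, or $|Z(G)| \ge 2^5$, then $G$ is abelian and $\rdim(G) = \rank(G) \le 7$, a contradiction; so $|Z(G)| = 2^4$ (whence $\Omega_1(Z(G)) = Z(G) \simeq (\bZt)^4$, since rank equals $4$) and $|G/Z(G)| = 2^3$. The faithful minimal representation is a sum of $r = 4$ irreducibles each of dimension dividing $\sqrt{[G:Z(G)]}$; since $2^3$ is not a perfect square, irreducible dimensions are $1$ or $2$, and $\rdim(G) \ge 11$ with four summands forces all four to have dimension $2$ (giving at most $8$) — already a contradiction, or more carefully forces enough $2$-dimensional summands that the bound $4 \cdot 2 = 8 < 11$ already settles it. So the $r = 4$ case is in fact immediate from counting. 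For $r = 3$: excluding the abelian possibilities as before leaves $|Z(G)| = 2^3$, $\Omega_1(Z(G)) = Z(G) \simeq (\bZt)^3$, and $|G/Z(G)| = 2^4$. Here irreducibles have dimension $1$, $2$, or $4$; a sum of three of them reaching $\ge 11$ requires one summand of dimension $4$ and hence — by Lemma~\ref{lem.inequality2}(a) applied with $\rho_1$ that $4$-dimensional irreducible — we get $\rdim(G) \le 4 + 2\sqrt{[G:Z(G)]} = 4 + 2\cdot 4 = 12$, which does not yet contradict $11$. So this is the genuinely hard sub-case.

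To handle $r = 3$, I would show that a $4$-dimensional irreducible $\rho_1$ of $G$ cannot have $\Omega_1(Z(G)) = Z(G)$ inside its kernel in a way that blocks Lemma~\ref{lem.inequality2}(a) unless $[G,G] \subsetneq Z(G)$, and then invoke Lemma~\ref{lem.inequality2}(b). Concretely: since $G/Z(G) \simeq (\bZt)^4$ is abelian, $[G,G] \subseteq Z(G)$, and the commutator pairing descends to an alternating bilinear map $\bar\Psi \colon G/Z(G) \times G/Z(G) \to Z(G)$, i.e. a linear map from $\Lambda^2\bigl((\bZt)^4\bigr)$, a space of dimension $6$, into $Z(G) \simeq (\bZt)^3$. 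I want $[G,G] = \operatorname{im}\bar\Psi \subsetneq Z(G)$. Equivalently, by Remark~\ref{r:symplectic} and the discussion around $\omega_K$, I want the associated subspace $K \subseteq \cA(G/Z(G))$ (the image of the dual map $Z(G)^* \to \cA((\bZt)^4)$) to have dimension $\le 2$ — for a $3$-dimensional $K$ would have to be symplectic (every nonzero form nondegenerate) for $\rho_1$ to attain dimension $\sqrt{|V|} = \sqrt{2^4} = 4$ via Lemma~\ref{l:uniqueird}, but a $3$-dimensional symplectic subspace of alternating forms on a $4$-dimensional $\F_2$-space would, by Lemma~\ref{l:Existence} run in reverse, force $\F_2$ to admit a degree-$3$ extension inside $\Mat_2(\F_2)$, i.e. a $2$-dimensional $\F_2$-space with a $3$-dimensional space of invertible endomorphisms — impossible since $\dim \End_{\F_2}(\F_2^2) = 4$ but $\GL_2(\F_2)$ has order $6$, and more to the point no $3$-dimensional linear subspace of $\Mat_2(\F_2)$ can avoid all non-invertible matrices (a direct check, since $\Mat_2(\F_2)$ has only $16$ elements and $10$ singular ones, so any $3$-dimensional — i.e. $8$-element — subspace meets the singular locus). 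Thus no $4$-dimensional irreducible of the Stone--von Neumann type exists, $K$ has dimension $\le 2$, hence $[G,G] \subsetneq Z(G) = \Omega_1(Z(G))$, so $\Omega_1(Z(G)) \not\subseteq [G,G]$, and Lemma~\ref{lem.inequality2}(b) gives $\rdim(G) \le 1 + (r-1)\sqrt{[G:Z(G)]} = 1 + 2\cdot 4 = 9 < 11$ — the desired contradiction.

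The main obstacle I anticipate is the $r = 3$ case, specifically verifying rigorously that $G$ cannot possess a $4$-dimensional irreducible representation whose existence would push the bound up to $12$: one must either rule out $3$-dimensional symplectic subspaces of $\cA(\F_2^4)$ (a small but genuine combinatorial/linear-algebra argument over $\F_2$) or, alternatively, argue directly that any $4$-dimensional irreducible has a kernel meeting $\Omega_1(Z(G))$ nontrivially so that Lemma~\ref{lem.inequality2}(a) can be re-applied with a lower-dimensional $\rho_1$. I expect a short case analysis on $\dim K \in \{1, 2, 3\}$ (equivalently on the rank of the commutator subgroup $[G,G] \le Z(G) \simeq (\bZt)^3$) will close all subcases, with $\dim K = 3$ eliminated as above and $\dim K \le 2$ handled by Lemma~\ref{lem.inequality2}(b); everything else ($r \ne 3, 4$, and the $r = 4$ subcase) is a routine numerical check against the table of values of $r\,2^{\lfloor(7-r)/2\rfloor}$.
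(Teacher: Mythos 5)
Your reduction of the $r=3$ case contains a fatal gap: you infer that if $G$ had a $4$-dimensional irreducible then the subspace $K\subseteq\cA(G/Z(G))$ obtained from the commutator pairing would have to be \emph{symplectic}, and since no $3$-dimensional symplectic subspace of $\cA(\F_2^4)$ exists you conclude $\dim K\le 2$, hence $[G,G]\subsetneq Z(G)$ and $\rdim(G)\le 9$ by Lemma~\ref{lem.inequality2}(b). But the existence of a single $4$-dimensional irreducible with central character $\psi$ only requires the \emph{one} form $\psi\circ\omega$ to be nondegenerate (this is exactly the per-character hypothesis of Lemma~\ref{l:uniqueird}); it does not require every nonzero element of $K$ to be nondegenerate. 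The paper's own example, constructed right after this lemma, is a group $G=H(V,K,\beta)$ of order $2^7$ with $\dim K=3$, exactly one degenerate nonzero element of $K$, $Z(G)=[G,G]=K^*\cong(\bZt)^3$ and $G/Z(G)\cong(\bZt)^4$: it has six $4$-dimensional irreducibles and $\rdim(G)=10$. This group satisfies every structural constraint of your $r=3$ subcase, yet $\Omega_1(Z(G))\subseteq[G,G]$ and $\rdim(G)=10>9$, so your conclusion is false and the route through Lemma~\ref{lem.inequality2}(b) cannot close this case. Two further problems: you assume without justification that $G/Z(G)\cong(\bZt)^4$ (for $|G/Z(G)|=16$ the quotient need not be abelian, let alone elementary abelian, and without class $\le 2$ the commutator map is not even a bilinear map into $Z(G)$), and Lemma~\ref{l:uniqueird} is stated only for groups of the form $H(V,K,\beta)$, not for an arbitrary group of order $128$. (A minor slip: the value of $r\,2^{\lfloor(7-r)/2\rfloor}$ at $r=4$ is $8$, not $12$; harmless, since your direct count disposes of that case anyway.)

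What actually has to be excluded is the configuration $Z(G)\cong(\bZt)^3$ with all three irreducible summands of a minimal faithful representation of dimension $4$, i.e.\ $\rdim(G)=12$, and this cannot be done by forcing $[G,G]\subsetneq Z(G)$. The paper's proof instead observes that in this configuration Lemma~\ref{lem.inequality2}(a) forces every irreducible of dimension $1$ or $2$ to contain $Z(G)$ in its kernel, hence to factor through $G/Z(G)$ of order $16$, so that $m_1+4m_2=16$ where $m_i$ is the number of irreducibles of dimension $i$; it then checks against the tables of James--Newman--O'Brien \cite{JNO90} that no group of order $128$ has $|Z(G)|=8$ and $m_1+4m_2=16$. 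Your proposal is missing any argument that performs this final elimination, and the linear-algebra substitute you offer proves something that is not true.
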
 

\begin{proof} 
Again, we argue by contradiction. Assume there exists a group $G$ of order $128$
and representation dimension $\ge 11$. Let $r$ be the rank of $Z(G)$. 
By~\eqref{e.inequality}, $r = 3$; otherwise we would have $\rdim(G) \le 10$.

As we explained in the introduction, 
this implies that a faithful representation $\rho$ of $G$ of minimal dimension
is the direct sum of three irreducibles $\rho_1$, $\rho_2$
and $\rho_3$, each of dimension $\le \sqrt{2^7/|Z(G)|}$.
If $|Z(G)| > 8$, then $\dim(\rho_i) \le 2$ and 
$\rdim(G) = \dim(\rho_1) + \dim(\rho_2) + \dim(\rho_3) \le 6$, a
contradiction.  

Therefore, $Z(G) \cong (\bZt)^{3}$ and 
$\dim(\rho_1) = \dim(\rho_2) = \dim(\rho_3) = 4$.
By Lemma~\ref{lem.inequality2}(a) this implies that the kernel of 
every irreducible representation of $G$ of dimension $1$ or 
$2$ must contain $Z(G)$. In other words, any such representation
factors through the group $G/Z(G)$ of order $16$. 
Consequently, if $m_i$ is the number of irreducible 
representations of $G$ of dimension $i$ then
$m_1 + 4 m_2 = 16$. We can now appeal 
to~\cite[Tables I and II]{JNO90}, to show that no group of 
order $2^{7}$ has these properties.  From Table I 
we can determine which groups $G$ (up to isoclinism, 
cf.~Remark~\ref{r:uniqueness}) 
have $|Z(G)|=8$ and using Table II we can determine $m_1$ and $m_2$ 
for these groups. There is no group $G$ 
with $|Z(G)|=8$ and $m_1 + 4m_2 = 16$. 
\end{proof} 
 
We will now construct an example of a group $G$ of order $2^{7}$ 
with $\rdim(G)=10$. Let $V = (\F_2)^4$ and
let $K$ be the 3-dimensional subspace 
of $A(V)$ generated by the following three elements: 
\[ 
 \begin{bmatrix} 
  0 & 0 & 0 & 1\\ 
  0 & 0 & 1 & 0\\ 
  0 & 1 & 0 & 0\\ 
  1 & 0 & 0 & 0\\ 
\end{bmatrix} 
, \quad 
\begin{bmatrix} 
  0 & 0 & 1 & 0\\ 
  0 & 0 & 1 & 1\\ 
  1 & 1 & 0 & 0\\ 
  0 & 1 & 0 & 0\\ 
\end{bmatrix} 
, \quad
\begin{bmatrix} 
  0 & 0 & 1 & 1\\ 
  0 & 0 & 0 & 1\\ 
  1 & 0 & 0 & 1\\ 
  1 & 1 & 1 & 0\\ 
\end{bmatrix} \, . 
\] 
Let $G:=H(V,K, \beta)=V \times K^{*}$ for some $\beta$ 
as in~\eqref{eq:decomposition}.
Note that $K$ contains only one non-zero degenerate 
element (the sum of the three generators). 
In other words, there is only one non-trivial 
character $\chi$ of $K^{*}$ such that 
$\chi \circ \omega: V \times V \rightarrow \C^{\times}$ 
is degenerate. By Remark~\ref{rem.centre} 
\begin{equation}\label{eq:HSpecial}
 [G,G] = Z(G) = K^{*}.
 \end{equation}
Let $\rho$ be a faithful representation of $G$ of minimal dimension. 
As we explained in the Introduction, 
$\rho$ is the sum of $\rank(Z(G)) = 3$ irreducibles. Denote 
them by $\rho_1$, $\rho_2$, and $\rho_3$, and their central characters
by $\chi_1$, $\chi_2$ and $\chi_3$, respectively. Since
$\rho$ is faithful, $\chi_1$, $\chi_2$ and $\chi_3$
form an $\mathbb F_2$-basis of $\Omega_{1}(Z(G))^{*} \simeq (\bZt)^3$.
By Lemma \ref{l:uniqueird}, for each 
nontrivial character $\chi$ of $K^{*}$ except one, there 
is a unique irreducible representation $\psi$ of $G$ such 
that $\chi$ is the central character 
to $\psi$, and $\dim \psi =4$. 
Thus at least $2$ of the irreducible components of $\rho$,
say, $\rho_1$ and $\rho_2$ must have dimension $4$. 
By Lemma \ref{l:rdimleqten}, $\dim(\rho) \le 10$, i.e., $\dim(\rho_3) \le 2$.
But every one-dimensional representation of $G$ has trivial 
central character. We conclude that $\dim(\rho_3) = 2$
and consequently $\rdim(G)= \dim(\rho) = 4+4+2=10$. 

Thus the maximal representation dimension of a group of order $2^{7}$ is $10$.

\begin{rmk} \label{r:SpecialNotHeisenberg}
The group $G$ constructed above has $16$ one-dimensional 
representations with trivial central character, 
$4$ two-dimensional representations with non-trivial 
degenerate central character, and $6$ four-dimensional 
representations with pair-wise distinct 
non-degenerate central characters. 
In view of (\ref{eq:HSpecial}), $H$ is a non-abelian 
special 2-group which does not enjoy the Stone-Von Neumann 
property (Corollary \ref{cor.stone}).  
\end{rmk}

\end{document}